\definecolor{DarkGreen}{rgb}{0.2,0.6,0.2}
\numberwithin{equation}{section}
\def\ignore#1{}
\def\ignore#1{}
\def\bZ{\mathbb Z}
\def\bN{\mathbb N}
\def\bT{\mathbb T}
\def\bP{{\mathbb P}}
\def\bE{{\mathbb E}}
\definecolor{DarkGreen}{rgb}{0.2,0.6,0.2}
\def\Ind#1{{\mathbbmss 1}_{_{\scriptstyle #1}}}
\def\eps{\varepsilon}
\def\<{\langle}
\def\>{\rangle}
 \def\ua{\uparrow}
 \def\da{\downarrow}
  \def\bbar{\overline}
\def\sgn{\text{\rm sgn}}
\def\<{\langle}\def\>{\rangle}
\def\Ind#1{{\mathbbmss 1}_{_{\scriptstyle #1}}}
\newtheorem{theorem}{Theorem}[section]
\newtheorem{lemma}[theorem]{Lemma}
\theoremstyle{definition}
\newtheorem{remark}[theorem]{Remark}
\begin{document}
\title{A probabilistic approach to the $\Phi$-variation of classical fractal functions with critical roughness}
\author{ Xiyue Han\thanks{Department of Statistics and Actuarial Science, University of Waterloo. E-mail: {\tt xiyue.han@uwaterloo.ca}} \and Alexander Schied\setcounter{footnote}{6}\thanks{Department of Statistics and Actuarial Science, University of Waterloo. E-mail: {\tt aschied@uwaterloo.ca}}
	 \and
	Zhenyuan Zhang\setcounter{footnote}{3}\thanks{
		Department of Statistics and Actuarial Science, University of Waterloo. E-mail: {\tt zhenyuan.zhang@uwaterloo.ca}
	 		\hfill\break The authors gratefully acknowledge financial support  from the
 Natural Sciences and Engineering Research Council of Canada through grant RGPIN-2017-04054}}
        
        \date{\normalsize  First version: July 2, 2020\\
   This version: September 11, 2020       }
\vspace{-2cm}
\maketitle

\vspace{-1cm}

\begin{abstract}We consider Weierstra\ss\ and Takagi-van der Waerden functions with critical degree of roughness. In this case, the functions have vanishing $p^{\text{th}}$ variation for all $p>1$ but are also nowhere differentiable and hence not of bounded variation either. We resolve this apparent puzzle by showing that 
 these functions have finite, nonzero, and linear Wiener--Young $\Phi$-variation along the sequence of $b$-adic partitions, where $\Phi(x)=x/\sqrt{-\log x}$. For the Weierstra\ss\ functions, our proof is based on the martingale central limit theorem (CLT). For the Takagi--van der Waerden functions, we use the  CLT for Markov chains if a certain parameter $b$ is odd, and the standard CLT for $b$ even. \end{abstract}
 
\noindent{\it Key words:}  Weierstra\ss\ function, Takagi-van der Waerden functions, Wiener--Young $\Phi$-variation, martingale central limit theorem, Markov chain central limit theorem

%
%

\section{Introduction and statement of results}

We consider a base function $\varphi:\mathbb R\to\mathbb R$ that is periodic with period 1 and Lipschitz continuous. 
Our aim is to study the function
\begin{align}\label{vdw}
f(t):=\sum_{m=0}^\infty \alpha^m\varphi(b^mt),\qquad t\in[0,1],
\end{align}
where  $b\in\{2,3,\dots\}$ and $\alpha\in (-1,1)$. Then the series on the right-hand side converges absolutely and uniformly in $t\in[0,1]$, so that $f$ is indeed a well defined continuous function. If 
$\varphi(t)=\nu\sin(2\pi t)+\rho\cos(2\pi t)$ for real constants $\nu$ and $\rho$, then  $f$ is a Weierstra\ss\ function. If $\varphi(t)=\min_{z\in\bZ}|z-t|$ is the tent map, then $f$ is a Takagi--van der Waerden function. It was shown in \cite{SchiedZZhang} that, under some mild conditions on $\varphi$, the function $f$ is of bounded variation for $|\alpha|<1/b$, whereas for $|\alpha|>1/b$ and $p:=-\log_{|\alpha|}b$ it has nontrivial and linear $p^{\text{th}}$ variation along the sequence 
\begin{equation}\label{b-adic partitions}
\mathbb{T}_n:=\{kb^{-n}:k=0,\dots,b^n\},\quad n\in\mathbb{N},
\end{equation}
  of $b$-adic partitions of $[0,1]$.  That is, for 
 all $t\in(0,1]$,
\begin{align}\label{pth}
\<f\>^{(q)}_t:=\lim_{n\uparrow\infty}\sum_{k=0}^{\lfloor tb^n\rfloor}\big|f((k+1)b^{-n})-f(kb^{-n})\big|^q=\begin{cases}
0&\ \text{ if } q>p,\\
t\cdot \mathbb{E}[|Z|^{q}]&\ \text{ if } q=p,\\
+\infty &\ \text{ if } q<p.\end{cases}
\end{align}
Here, $Z$ is a certain random variable, whose law is known in some special cases. For instance, if $\varphi$ is the tent map and $b$ is even, then the law of $\alpha bZ$ is the infinite Bernoulli convolution with parameter $1/(|\alpha |b)$ (see also \cite{Gantert,SchiedTakagi,MishuraSchied2} for earlier results in this special setup). Clearly, the parameter $p=-\log_{|\alpha|}b$ can be regarded as a measure for the \lq\lq roughness" of the function $f$. As a matter of fact, it is well known that a typical sample path  $t\mapsto B_H(t)$ of a fractional Brownian motion has linear $p^{\text{th}}$ variation $\<B_H\>^{(p)}_t=t\cdot \mathbb E[|B^H(1)|^p]$ for $p=1/H$.

\begin{remark}[On the connection with pathwise It\^o calculus]
Our interest in the $p^{\text{th}}$ variation of fractal functions is motivated by its connection to pathwise It\^o calculus. For instance, if  $|\alpha|=1/\sqrt b$, we have 
 $p=2$ and the limit in \eqref{pth} is just the usual quadratic variation of the function $f$, taken along the partition sequence $\{\bT_n\}_{n\in\bN}$.  It was observed by F\"ollmer \cite{FoellmerIto} that the existence of this limit is sufficient for the validity of It\^o's formula with integrator $f$, and this is the key to a rich theory of pathwise It\^o calculus with applications to robust finance; see, e.g., \cite{FoellmerSchiedBernoulli} for a discussion. Recently, Cont and Perkowski \cite{ContPerkowski} extended F\"ollmer's It\^o formula to functions with finite $p^{\text{th}}$ variation, which has led to a substantial increase in the interest in corresponding \lq\lq rough" trajectories with $p>2$.
 \end{remark}

In this note, we study the case of critical roughness, 
$\alpha=-1/b$ or $\alpha=1/b$,
in which $p=1$. For this case, it was shown in \cite{SchiedZZhang} that $\<f\>^{(q)}_t=0$ for all $q>1$ and $t\in[0,1]$. This, however, does \emph{not} imply that $f$ is of bounded variation.  For instance, if $\varphi$ is the tent map, $b=2$, and $\alpha=1/2$, then $f$ is the classical Takagi function, which is nowhere differentiable and hence cannot be of bounded variation; a very short proof of this fact was given by de Rham \cite{deRham} and later rediscovered by    Billingsley \cite{Billingsley1982}. For the Weierstra\ss\ function, the proof of nowhere differentiability for all $\alpha\in[1/b,1)$ is more difficult. Starting from Weierstra\ss's original work,
it attracted numerous authors until a definite result was given by  Hardy \cite{Hardy1916}.

It is therefore apparent that, in the critical case $|\alpha|=1/b$, power variation $\<f\>^{(q)}$ is insufficient to capture the exact degree of roughness of the function $f$. To give a precise result on the roughness of the function $f$ in the critical case, we take a strictly increasing function $\Phi:[0,1)\to[0,\infty)$ and investigate the limit
$$
\<f\>^\Phi_t:=\lim_{n\uparrow\infty}\sum_{k=0}^{\lfloor tb^n\rfloor}\Phi\big(|f((k+1)b^{-n})-f(kb^{-n})|\big),
$$
which can be regarded as the Wiener--Young $\Phi$-variation of $f$ (see, e.g., \cite{Appelletal}), restricted to the sequence of $b$-adic partitions \eqref{b-adic partitions}. Our main results will show that the correct choice for $\Phi$ is the function
$$\Phi(x)=\frac{x}{\sqrt{-\log x}}\quad\text{for $x\in(0,1)$\quad and}\quad \Phi(0):=0.
$$
We fix this function $\Phi$ throughout the remainder of this paper.
Our first result establishes the $\Phi$-variation of $f$ from \eqref{vdw} for the class of Takagi--van der Waerden functions. 

\begin{theorem}\label{TvdW thm}Let $\varphi(t)=\min_{z\in\bZ}|z-t|$ be the tent map, $b\in\{2,3,\dots\}$, and $|\alpha|=1/b$. Then  the $\Phi$-variation of the Takagi--van der Waerden function $f$ exists along $\{\bT_n\}_{n\in\bN}$.
 If $b$ is even,  then it is given by
$$\<f\>^\Phi_t=t\cdot\sqrt{\frac2{\pi\log b}},\qquad t\in[0,1].
$$
 If $b$ is odd, then 
$$\<f\>^\Phi_t=t\cdot\sqrt{\frac{2(b+\sgn(\alpha))}{\pi(b-\sgn(\alpha))\log b}},\qquad t\in[0,1].
$$
\end{theorem}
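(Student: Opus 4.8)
The plan is to show that the $b$-adic increments of $f$ are, up to the deterministic factor $b^{-n}$, integer-valued quantities $S_n(k)$ satisfying a central limit theorem, and then to push that CLT through the mildly non-standard normalization hidden in $\Phi(x)=x/\sqrt{-\log x}$. \textbf{Step 1 (digit description of the increments).} Put $\Delta_{n,k}:=f((k+1)b^{-n})-f(kb^{-n})$. Since $\varphi(b^m\,\cdot\,)$ has period $b^{-m}$, the terms with $m\ge n$ in \eqref{vdw} do not contribute, so $\Delta_{n,k}=\sum_{m=0}^{n-1}\alpha^m\big(\varphi((k+1)b^{m-n})-\varphi(kb^{m-n})\big)$. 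The tent map is piecewise linear with slopes $\pm1$ and apex at the half-integers; an interval of length $b^{m-n}$ either avoids the apex — contributing $\pm b^{m-n}$ according to the sign of the slope — or (only possible when $b$ is odd) is centred at a half-integer, contributing $0$ by symmetry. Hence $\varphi((k+1)b^{m-n})-\varphi(kb^{m-n})=b^{m-n}\epsilon_{m,k}$ with $\epsilon_{m,k}\in\{-1,+1\}$ for $b$ even and $\epsilon_{m,k}\in\{-1,0,+1\}$ for $b$ odd, and, since $|\alpha b|=1$,
$$
\Delta_{n,k}=b^{-n}S_n(k),\qquad S_n(k):=\sum_{m=0}^{n-1}(\sgn\alpha)^m\epsilon_{m,k}\in\bZ,\qquad |S_n(k)|\le n .
$$
Moreover $\epsilon_{m,k}$ is explicit in the base-$b$ digits $d_0(k),d_1(k),\dots$ of $k$: whether $\varphi$ is increasing, decreasing, or symmetric on the interval in question is decided by comparing the string $d_{n-m-1}(k)\cdots d_0(k)$ with the base-$b$ expansion of $\tfrac12$. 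For $b$ even that expansion is $0.(b/2)00\cdots$, so $\epsilon_{m,k}$ depends only on the single digit $d_{n-m-1}(k)$; for $b$ odd it is $0.\overline{(b-1)/2}$, so $\epsilon_{m,k}$ is the sign of the first digit in that string which differs from $(b-1)/2$, and vanishes if there is no such digit.

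\textbf{Step 2 (the CLT for $S_n$).} Let $k$ be uniform on $\{0,\dots,b^n-1\}$, so that $d_0(k),d_1(k),\dots$ are i.i.d.\ uniform on $\{0,\dots,b-1\}$. If $b$ is even, then $\epsilon_{0,k},\dots,\epsilon_{n-1,k}$ depend on the \emph{distinct} digits $d_{n-1}(k),\dots,d_0(k)$ and hence are i.i.d.\ fair signs; the factors $(\sgn\alpha)^m$ do not change this, so the classical CLT gives $S_n(k)/\sqrt n\Rightarrow N(0,\sigma^2)$ with $\sigma^2=1$. If $b$ is odd, define $\xi_\tau\in\{L,R,U\}$ by setting $\xi_\tau=L$ or $R$ according as $d_\tau(k)<(b-1)/2$ or $>(b-1)/2$, and $\xi_\tau=\xi_{\tau-1}$ (with $\xi_0=U$) when $d_\tau(k)=(b-1)/2$. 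By Step 1, $\epsilon_{m,k}=\varepsilon(\xi_{n-1-m})$ with $\varepsilon(L)=1,\varepsilon(R)=-1,\varepsilon(U)=0$, whence $|S_n(k)|=\big|\sum_{\tau=0}^{n-1}(\sgn\alpha)^\tau\varepsilon(\xi_\tau)\big|$. Since $d_\tau(k)$ is independent of $d_0(k),\dots,d_{\tau-1}(k)$, the process $(\xi_\tau)$ is a Markov chain; the state $U$ is transient, and on the aperiodic recurrent class $\{L,R\}$ it is the two-state chain with switching probability $(b-1)/(2b)$ and stationary law $(\tfrac12,\tfrac12)$, for which a one-line computation with the transition matrix gives $\cov_\pi(\varepsilon(\xi_0),\varepsilon(\xi_\ell))=b^{-|\ell|}$. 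The Markov chain CLT then yields $S_n(k)/\sqrt n\Rightarrow N(0,\sigma^2)$ with
$$
\sigma^2=\sum_{\ell\in\bZ}\big(\sgn(\alpha)/b\big)^{|\ell|}=\frac{1+\sgn(\alpha)/b}{1-\sgn(\alpha)/b}=\frac{b+\sgn(\alpha)}{b-\sgn(\alpha)},
$$
the alternating signs in the case $\alpha<0$ being absorbed via $(\sgn\alpha)^{\tau}(\sgn\alpha)^{\tau'}=(\sgn\alpha)^{|\tau-\tau'|}$. In every case $\bE[(S_n(k)/\sqrt n)^2]=O(1)$, so $|S_n(k)|/\sqrt n$ is uniformly integrable and $\bE[|S_n(k)|/\sqrt n]\to\sigma\sqrt{2/\pi}$.

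\textbf{Step 3 (removing $\sqrt{-\log}$ and summing over $k$).} When $S_n(k)\ne0$ we have $1\le|S_n(k)|\le n$, hence $0\le\log|S_n(k)|\le\log n$, so
$$
\Phi(|\Delta_{n,k}|)=\frac{b^{-n}|S_n(k)|}{\sqrt{n\log b-\log|S_n(k)|}}=\frac{b^{-n}|S_n(k)|}{\sqrt{n\log b}}\big(1+O(\log n/n)\big)
$$
uniformly in $k$, while $\Phi(|\Delta_{n,k}|)=0$ when $S_n(k)=0$. Summing,
$$
\sum_{k=0}^{\lfloor tb^n\rfloor}\Phi(|\Delta_{n,k}|)=\frac{1+o(1)}{\sqrt{\log b}}\cdot\frac1{b^n}\sum_{k=0}^{\lfloor tb^n\rfloor}\frac{|S_n(k)|}{\sqrt n},
$$
so it remains to show $b^{-n}\sum_{k=0}^{\lfloor tb^n\rfloor}|S_n(k)|/\sqrt n\to t\,\sigma\sqrt{2/\pi}$. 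For $t=1$ this is exactly the last conclusion of Step 2. For a $b$-adic $t=Kb^{-N}$, split (up to a single boundary term) $\{0,\dots,\lfloor tb^n\rfloor\}$ into the $K$ blocks $\{jb^{n-N},\dots,(j+1)b^{n-N}-1\}$; writing $k=jb^{n-N}+r$, the part of $S_n(k)$ coming from $m\ge N$ depends only on $r$ and, for $r$ uniform, obeys the CLT of Step 2 with $n$ replaced by $n-N$, while the remaining $N$ terms are bounded by $N$ in absolute value; hence each block contributes $b^{-N}\sigma\sqrt{2/\pi}+o(1)$, and summing over $j$ gives $t\,\sigma\sqrt{2/\pi}$. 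For general $t$, sandwich $\lfloor tb^n\rfloor$ between $\lfloor t'b^n\rfloor$ and $\lfloor t''b^n\rfloor$ for $b$-adic $t'\le t\le t''$; the partial sums are nondecreasing in the upper limit and the limit is linear on the dense set of $b$-adic rationals, so letting $t'\uparrow t$, $t''\downarrow t$ shows that $\<f\>^\Phi_t$ exists and equals $t\,\sigma\sqrt{2/(\pi\log b)}$. Inserting $\sigma^2=1$ for $b$ even and $\sigma^2=(b+\sgn\alpha)/(b-\sgn\alpha)$ for $b$ odd gives the two stated formulas.

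\textbf{The main obstacle} is Step 2 for odd $b$: one must recognize that comparison with the non-terminating expansion $0.\overline{(b-1)/2}$ of $\tfrac12$ creates genuine (non-i.i.d.) dependence among the signs $\epsilon_{m,k}$, identify the finite-state chain $(\xi_\tau)$ that governs them, and compute its stationary autocovariance $b^{-|\ell|}$ — this is precisely the ingredient that produces the constant $\sqrt{(b+\sgn\alpha)/(b-\sgn\alpha)}$. Everything else — the reduction in Step 1, the uniform integrability, the removal of $\sqrt{-\log x}$, and the passage from expectations to partial sums over $k$ — is soft by comparison.
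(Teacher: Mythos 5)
Your proposal is correct and follows essentially the same route as the paper: you represent the $b$-adic increments as $b^{-n}$ times a signed sum of $\{-1,0,+1\}$-valued variables driven by the base-$b$ digits of $k$, identify these as i.i.d.\ fair signs for $b$ even and as (a function of) a two-state Markov chain with autocovariance $b^{-|\ell|}$ for $b$ odd (yielding $\sigma^2=(b+\sgn\alpha)/(b-\sgn\alpha)$ exactly as in the paper), and then transfer the CLT through the normalization $\Phi(x)=x/\sqrt{-\log x}$. The only differences are cosmetic or local: you re-derive the sign/Markov structure from the base-$b$ expansion of $\tfrac12$ where the paper cites its earlier work, your use of the integrality bound $0\le\log|S_n(k)|\le\log n$ gives a slightly cleaner uniform replacement of the denominator than the paper's two-sided $\beta$-and-$\eps$ estimates, and your block-plus-sandwich treatment of $t<1$ replaces the paper's independence argument with the high digits $R_{m,n}$ — all valid.
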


Our results will be consequences of suitable central limit theorems (CLTs). In the preceding theorem, the case of $b$ even  will be settled by the standard CLT, whereas the case of $b$ odd will require the use of a CLT for Markov chains. For establishing the $\Phi$-variation of the critical Weierstra\ss\ functions, as stated in the following theorem, we rely on the martingale CLT. A loosely related CLT for the classical Takagi function was proved by Gamkrelidze \cite{Gamkrelidze}.

\begin{theorem}\label{Weierstrass thm}Suppose $\varphi(t)=\nu\sin(2\pi t)+\rho\cos(2\pi t)$, $b\in\{2,3,\dots\}$, and $|\alpha|=1/b$. Then  the $\Phi$-variation of the Weierstra\ss\ function $f$ exists along $\{\bT_n\}_{n\in\bN}$ and is given by
$$\<f\>^\Phi_t=t\cdot2\sqrt{\frac{\pi(\nu^2+\rho^2)}{\log b}},\qquad t\in[0,1].
$$
\end{theorem}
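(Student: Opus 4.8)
The plan is to reduce the statement to an application of the martingale central limit theorem, exactly in the spirit of what the introduction promises. First I would set up the increments: write $\varphi(t) = \nu\sin(2\pi t) + \rho\cos(2\pi t) = A\sin(2\pi t + \theta)$ with $A = \sqrt{\nu^2+\rho^2}$, and compute the $b$-adic increment $\Delta_{n,k}f := f((k+1)b^{-n}) - f(kb^{-n})$. Because $\alpha^m\varphi(b^mt)$ has period $b^{-m}$ in $t$, the terms with $m \ge n$ contribute zero to $\Delta_{n,k}f$ after telescoping over a $b$-adic interval of length $b^{-n}$; more precisely, since $|\alpha| = 1/b$, one gets $\Delta_{n,k}f = \sum_{m=0}^{n-1}\alpha^m\big(\varphi(b^m(k+1)b^{-n}) - \varphi(b^m k b^{-n})\big)$, and after factoring out $b^{-(n-1)}$ (the size of the smallest surviving scale) one is led to study $b^{n-1}\Delta_{n,k}f$, which is a trigonometric sum of $n$ terms of comparable size. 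The key structural fact I would extract is that, writing $k$ in base $b$, this rescaled increment depends on $k$ through the digits of $k$, and as $k$ ranges over $\{0,\dots,b^n-1\}$ these behave like partial sums driven by i.i.d.\ (uniform digit) randomness — this is what turns the deterministic sum $\sum_k \Phi(|\Delta_{n,k}f|)$ into a law-of-large-numbers / CLT statement.

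Second, I would make the probabilistic translation precise: equip $\{0,\dots,b^n-1\}$ with the uniform measure, so that $k = \sum_{j} D_j b^{j}$ with $D_j$ i.i.d.\ uniform on $\{0,\dots,b-1\}$, and express $\sqrt{n}\cdot(\text{something}) = b^{n-1}\Delta_{n,k}f$ (up to lower-order boundary terms) as a sum $S_n = \sum_{j=1}^{n} \xi_j$ where $\xi_j$ is a bounded function of the digit $D_j$ (and possibly a fractional-part remainder). Checking that $\{\xi_j\}$ forms a martingale difference sequence with respect to the natural filtration, with summable/convergent conditional variances, is the crux; then the martingale CLT gives that $S_n/\sqrt{n} \Rightarrow \mathcal N(0,\sigma^2)$ under the uniform measure on digits, for an explicit $\sigma^2$. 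Here $\sigma^2$ will come out as an integral of $\varphi$-type quantities against uniform measure, and I expect $\sigma^2 = A^2/2$ or a small rational multiple thereof, to be pinned down by a direct second-moment computation of the trigonometric increment.

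Third, I would convert the CLT on $b^{n-1}\Delta_{n,k}f$ into the claimed limit for $\<f\>^\Phi_t$. Note $\Phi(|x|) = |x|/\sqrt{-\log|x|}$ and $|\Delta_{n,k}f| \approx b^{-(n-1)}|S_n|$ with $|S_n| = O(\sqrt n)$, so $-\log|\Delta_{n,k}f| = (n-1)\log b + O(\log n) \sim n\log b$. Hence
$$
\sum_{k=0}^{\lfloor tb^n\rfloor}\Phi(|\Delta_{n,k}f|) = \sum_{k=0}^{\lfloor tb^n\rfloor}\frac{|\Delta_{n,k}f|}{\sqrt{-\log|\Delta_{n,k}f|}} \approx \frac{1}{\sqrt{n\log b}}\cdot b^{-(n-1)}\sum_{k=0}^{\lfloor tb^n\rfloor}|S_n(k)|.
$$
Since $\lfloor tb^n\rfloor \approx t\,b^n$, the average $b^{-n}\sum_{k} |S_n(k)|$ is $t$ times $\mathbb E_{\text{unif}}[|S_n|]$, and by the CLT together with uniform integrability (immediate from boundedness of the $\xi_j$, so $\mathbb E[S_n^2]$ stays bounded), $\mathbb E[|S_n|] \to \sqrt{\sigma^2}\cdot\mathbb E[|Z|] = \sigma\sqrt{2/\pi}$ where $Z\sim\mathcal N(0,1)$. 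Collecting the $b$ factors — the $b^{-(n-1)}$ against the $b^n$ from the partition gives a clean $b$ — and substituting $\sigma = A/\sqrt 2$ should produce precisely $t\cdot 2\sqrt{\pi(\nu^2+\rho^2)/\log b}$; matching the constant is a bookkeeping check.

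The main obstacle, I expect, is the second step: rigorously controlling the error between $b^{n-1}\Delta_{n,k}f$ and a clean martingale sum, and verifying the Lindeberg/conditional-variance hypotheses of the martingale CLT uniformly. There are two subtleties. First, the increment involves $\varphi$ evaluated at $b^m k b^{-n} = b^{m-n}k$, whose fractional part mixes all digits $D_0,\dots,D_{n-1-m}$ of $k$ in a way that is not literally a function of a single digit, so one must identify the right filtration (reading digits from the top, $\mathcal F_j = \sigma(D_{n-1}, D_{n-2},\dots,D_{n-j})$) under which successive contributions genuinely telescope into martingale differences — this is presumably why the argument is "based on the martingale CLT" rather than the plain CLT. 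Second, one must handle the deterministic-to-random passage carefully: $\sum_k \Phi(|\Delta_{n,k}f|)$ is a genuine sum over a deterministic set, and the statement "it behaves like $t b^n \cdot \mathbb E[\cdots]$" requires not just convergence in distribution but convergence of the first absolute moment, plus control of the $\sqrt{-\log|\Delta_{n,k}f|}$ factor uniformly in $k$ (including the exceptional $k$ where $\Delta_{n,k}f$ is atypically small, i.e.\ $S_n(k)$ near $0$, which could blow up $1/\sqrt{-\log|\cdot|}$; these must be shown to be negligible in count). Once the martingale structure is correctly identified, the rest is a controlled computation.
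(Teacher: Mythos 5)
Your overall architecture --- rewriting $\sum_k\Phi(|\Delta_{n,k}f|)$ as $b^n$ times an expectation over a uniformly random $b$-adic index, decomposing the rescaled increment into a sum of $n$ digit-driven terms, applying the martingale CLT, and then upgrading convergence in law to convergence of a truncated first moment while controlling the $\sqrt{-\log|\cdot|}$ factor --- is exactly the paper's. But the step you yourself identify as the crux is left open, and your guess for how to resolve it points in the wrong direction. The martingale is \emph{not} adapted to the digits read from the most significant end: writing $R_m=\sum_{i=1}^m U_ib^{i-1}$ for the $m$ low-order digits, periodicity gives $f((k+1)b^{-n})-f(kb^{-n})=b^{-n}\sgn(\alpha)^n\sum_{m=1}^nY_m$ with $Y_m=\sgn(\alpha)^m b^{m}\big(\varphi((R_m+1)b^{-m})-\varphi(R_mb^{-m})\big)$, so the $m$-th summand is a difference quotient of $\varphi$ at scale $b^{-m}$ and is measurable with respect to $\sigma(U_1,\dots,U_m)$; the filtration reveals digits from the \emph{least} significant end. (Your proposed $\mathcal F_j=\sigma(D_{n-1},\dots,D_{n-j})$ makes no partial sum measurable, since every scale's term depends on $D_0$.) More importantly, the martingale property is not a generic consequence of digit randomness: $\mathbb E[Y_m\mid U_1,\dots,U_{m-1}]$ reduces to $\sum_{\ell=0}^{b-1}\big(\varphi(x+b^{-m}+\ell/b)-\varphi(x+\ell/b)\big)$, which vanishes precisely because $\sum_{\ell=0}^{b-1}e^{2\pi i(x+\ell/b)}=0$. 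This exact trigonometric cancellation is the reason the martingale argument is specific to Weierstra\ss-type $\varphi$, and it is absent from your plan.

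Two further points. First, the conditional variances $\mathbb E[Y_k^2\mid\mathscr F_{k-1}]$ are functions of the points $b^{-k}R_{k-1}$, not constants, so proving $\frac1n\langle Z\rangle_n\to\sigma^2$ requires showing that these points equidistribute on $[0,1]$ almost surely; the paper does this via Birkhoff's ergodic theorem for the two-sided Bernoulli shift and obtains $\sigma^2=\int_0^1(\varphi'(t))^2\,dt=2\pi^2(\nu^2+\rho^2)$ (your provisional guess $A^2/2$ does not match, though with the paper's normalization the final constant $\sqrt{2\sigma^2/(\pi\log b)}=2\sqrt{\pi(\nu^2+\rho^2)/\log b}$ does check out). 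Second, your worry about indices where $S_n(k)$ is near $0$ ``blowing up'' $1/\sqrt{-\log|\cdot|}$ is pointed the wrong way: small increments make the denominator large and $\Phi$ small, so they cost nothing. The care is needed in the upper bound, where one uses $|Z_n|\le Cn$ to get $n\log b-\log|Z_n|\ge n\beta$ for any $\beta<\log b$ and $n$ large, and in the lower bound, where one truncates at $|Z_n|\ge\eps\sqrt n$ so that the denominator is at most $\sqrt{n\log b}$. With these ingredients supplied, your plan becomes the paper's proof.
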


%

\section{Proofs}\label{proofs section}

We first consider only the $\Phi$-variation $\<f\>^\Phi_t$ for $t=1$. The case  $t<1$ will be discussed  at the end of this section, simultaneously for both theorems. 
 We
 fix $b\in\{2,3,\dots\}$ and $\alpha\in\{-1/b,+1/b\}$. 
Following \cite{SchiedZZhang}, we let $(\Omega,\mathscr{F},\mathbb{P})$ be a probability space supporting an independent sequence $U_1,U_2,\dots$  of random variables with 
a uniform distribution on $\{0,1,\dots, b-1\}$ and define the stochastic process
$
R_m:=\sum_{i=1}^mU_ib^{i-1}.
$
Note that $R_m$ has a uniform distribution on $\{0,\dots, b^m-1\}$. 
Therefore, for $n\in\bN$ such that all increments $\big|f((k+1)b^{-n})-f(kb^{-n})\big|$ are less than 1,
\begin{equation}\label{Vn def eq}
V_n:=\sum_{k=0}^{b^n-1}\Phi\big(\big|f((k+1)b^{-n})-f(kb^{-n})\big|\big)=b^n\mathbb E\Big[\Phi\big(\big|f((R_n+1)b^{-n})-f(R_nb^{-n})\big|\big)\Big].
\end{equation}
To simplify the expectation on the right, let the $n^{\text{th}}$ truncation of $f$ be given by
$
f_n(t)=\sum_{m=0}^{n-1} \alpha^m\varphi(b^mt).
$
The periodicity of $\varphi$ implies that
\begin{align*}
f((R_n+1)b^{-n})-f(R_nb^{-n})&=f_n((R_n+1)b^{-n})-f_n(R_nb^{-n})\\
&=b^{-n} \sgn(\alpha)^n\sum_{m=1}^{n}\sgn(\alpha)^{m}\frac{\varphi((R_n+1)b^{-m})-\varphi(R_nb^{-m})}{b^{-m}}.
\end{align*}
The periodicity of $\varphi$ implies moreover that for $m\le n$,
$$\varphi\big(x+R_nb^{-m}\big)=\varphi\Big(x+\sum_{i=1}^nU_ib^{i-1-m}\Big)=\varphi\Big(x+\sum_{i=1}^mU_ib^{i-1-m}\Big)=\varphi\big(x+R_mb^{-m}\big).
$$
Therefore,
$$\sgn(\alpha)^{m}\frac{\varphi((R_n+1)b^{-m})-\varphi(R_nb^{-m})}{b^{-m}}=\sgn(\alpha)^{m}\frac{\varphi((R_m+1)b^{-m})-\varphi(R_mb^{-m})}{b^{-m}}=:Y_m.
$$
It follows that 
\begin{align}\label{Vn eq}
V_n&=b^n\mathbb{E}\bigg[\Phi\Big(b^{-n}\Big|\sum_{m=1}^{n}Y_m\Big|\Big)\bigg].\
\end{align}

\begin{lemma}\label{CLT lemma}Suppose that $Z_0,Z_1,Z_2,\dots$ is a sequence of random variables with $Z_0=0$ and uniformly bounded increments such that the laws of $\frac1{\sqrt n}Z_n$ converge weakly to some  normal distribution $N(0,\sigma^2)$ with $\sigma^2>0$ and that the expression $\frac1n\bE[Z_n^2]$ is bounded in $n$. Then
$$b^n\bE\Big[\Phi\big(b^{-n}\big|Z_n\big|\big)\Big]\longrightarrow \sqrt{\frac{2\sigma^2}{\pi\log b}}.
$$
\end{lemma}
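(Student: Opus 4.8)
The plan is to recognize that, after a short algebraic manipulation, $b^n\bE[\Phi(b^{-n}|Z_n|)]$ is the expectation of a random variable converging weakly to $\sigma|N|/\sqrt{\log b}$ (with $N$ standard normal), and then to upgrade weak convergence to convergence of expectations by uniform integrability. Concretely, since $Z_0=0$ and the increments are bounded by some constant $C$, we have $|Z_n|\le Cn$, so $b^{-n}|Z_n|\in[0,1)$ once $n$ is large, and there
$$b^n\Phi(b^{-n}|Z_n|)=\frac{|Z_n|}{\sqrt{n\log b-\log|Z_n|}}=\frac{|Z_n|/\sqrt n}{\sqrt{\log b-(\log|Z_n|)/n}}=:W_n,$$
with the convention $W_n:=0$ on $\{Z_n=0\}$ (consistent with $\Phi(0)=0$). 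Thus it suffices to prove $\bE[W_n]\to\sqrt{2\sigma^2/(\pi\log b)}$, using that $\bE[W_n]=b^n\bE[\Phi(b^{-n}|Z_n|)]$ for $n$ large.

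Next I would show that $W_n$ converges in distribution to $W:=\sigma|N|/\sqrt{\log b}$. By the continuous mapping theorem, $|Z_n|/\sqrt n$ converges weakly to $|N(0,\sigma^2)|$. The key point is that $(\log|Z_n|)/n\to0$ in probability: the deterministic bound $|Z_n|\le Cn$ gives $(\log|Z_n|)/n\le(\log Cn)/n\to0$, while for the lower tail, given $\eps>0$ and $\delta>0$ one has, for $n$ large, $\bP\big((\log|Z_n|)/n<-\eps\big)=\bP\big(|Z_n|<e^{-n\eps}\big)\le\bP\big(|Z_n|/\sqrt n\le\delta\big)$, and by weak convergence to the law $N(0,\sigma^2)$, which is non-degenerate because $\sigma^2>0$, the $\limsup$ of the right-hand side is at most $\bP\big(|N(0,\sigma^2)|\le\delta\big)$, which tends to $0$ as $\delta\downarrow0$. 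Slutsky's theorem then gives $W_n\Rightarrow W$; the exceptional event $\{Z_n=0\}$ has vanishing probability and does not affect the limit.

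It remains to establish uniform integrability of $\{W_n\}$, since weak convergence together with uniform integrability yields $\bE[W_n]\to\bE[W]$. For $n$ so large that $\log(Cn)\le\tfrac12 n\log b$, distinguishing the cases $|Z_n|\ge1$, $0<|Z_n|<1$, and $Z_n=0$ shows $n\log b-\log|Z_n|\ge\tfrac12 n\log b$ on $\{Z_n\ne0\}$, hence $W_n\le\sqrt{2/\log b}\cdot|Z_n|/\sqrt n$ in all cases; since $\bE[(|Z_n|/\sqrt n)^2]=\bE[Z_n^2]/n$ is bounded by hypothesis, the family $\{|Z_n|/\sqrt n\}$ is bounded in $L^2$, hence uniformly integrable, and so is $\{W_n\}$. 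Finally $\bE[W]=\tfrac{\sigma}{\sqrt{\log b}}\,\bE[|N|]=\tfrac{\sigma}{\sqrt{\log b}}\sqrt{2/\pi}=\sqrt{2\sigma^2/(\pi\log b)}$, which is the assertion.

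I expect the main obstacle to be the anticoncentration step $(\log|Z_n|)/n\to0$ in probability — this is precisely where the non-degeneracy $\sigma^2>0$ is needed, and it is what makes the denominator in the definition of $W_n$ behave like the constant $\sqrt{\log b}$; keeping the dominating bound for the uniform-integrability argument clean is a secondary point, while the remaining steps are routine applications of Slutsky's theorem and the standard fact that weak convergence plus uniform integrability implies convergence of means.
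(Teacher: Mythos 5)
Your proof is correct, and it reorganizes the argument in a way that differs mildly but genuinely from the paper's. Both proofs start from the same forced identity $b^n\Phi(b^{-n}|Z_n|)=|Z_n|/\sqrt{n\log b-\log|Z_n|}$ and both ultimately rest on the same two ingredients: the deterministic bound $|Z_n|\le Cn$ to control the logarithm in the denominator, and the $L^2$-boundedness of $Z_n/\sqrt n$ to get uniform integrability. The difference is in the packaging. You identify the weak limit of the entire random variable $W_n$ via Slutsky's theorem and then invoke ``weak convergence plus uniform integrability implies convergence of means'' once; this requires you to prove that $(\log|Z_n|)/n\to 0$ in probability, and in particular to handle the lower tail of $|Z_n|$ by an anticoncentration argument using the nondegeneracy $\sigma^2>0$ (your argument there is fine: $\bP(|Z_n|<e^{-n\eps})\le\bP(|Z_n|/\sqrt n\le\delta)$ and portmanteau). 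The paper instead runs a $\limsup$/$\liminf$ sandwich: for the upper bound it only uses $\log|Z_n|\le\log(Cn)$ to get the denominator $\ge\sqrt{n\beta}$ for any $\beta<\log b$, and for the lower bound it restricts to the event $\{|Z_n|/\sqrt n\ge\eps\}$, on which $\log|Z_n|\ge 0$ and the denominator is $\le\sqrt{n\log b}$ --- thereby sidestepping the lower tail of $|Z_n|$ entirely --- and then sends $\beta\uparrow\log b$ and $\eps\downarrow 0$, using convergence of truncated first moments (itself a consequence of the same uniform integrability). Your route is more modular and makes the limiting random variable $\sigma|N|/\sqrt{\log b}$ explicit; the paper's route avoids the anticoncentration step at the cost of juggling two truncation parameters. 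Both are complete and yield $\sqrt{2\sigma^2/(\pi\log b)}$.
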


\begin{proof}The fact that $\frac1n\bE[Z_n^2]$ is bounded implies together with  standard arguments that for every nondegenerate interval $I\subset[0,\infty)$,
\begin{equation}\label{moment CLT eq}
\lim_{n\uparrow\infty}\mathbb E\Big[\Ind{\{|\frac1{\sqrt n}Z_n|\in I\}}\Big|\frac1{\sqrt n}Z_n\Big|\Big]= \frac1{\sqrt{2\pi\sigma^2}}\int_{\{|z|\in I\}}|z|e^{-z^2/(2\sigma^2)}\,dz.\end{equation}
We have
$$b^n\bE\Big[\Phi\big(b^{-n}\big|Z_n\big|\big)\Big]=\mathbb E\bigg[\frac{|Z_n|}{\sqrt{n\log b-\log |Z_n|}}\Ind{\{|Z_n|>0\}}\bigg].$$
Let $C$ be an almost sure uniform bound for $|Z_{k+1}-Z_k|$. Hence, for all $\beta\in(0,\log b)$ there exists $n_0\in\bN$ such that $n\beta<n\log b-\log (Cn)$ for all $n\ge n_0$. Hence, 
\begin{equation}\label{log lower estimate}
\sqrt{n\log b-\log  |Z_n|}\ge \sqrt{n\beta}\quad\text{for $n\ge n_0$,}
\end{equation}
and taking $I:=(0,\infty)$ in \eqref{moment CLT eq} gives
$$\limsup_{n\ua\infty}b^n\bE\Big[\Phi\big(b^{-n}\big|Z_n\big|\big)\Big]\le \frac1{\sqrt{2\pi\sigma^2\beta}}\int_{\{|z|\in I\}}|z|e^{-z^2/(2\sigma^2)}\,dz=\sqrt{\frac{2\sigma^2}{\pi\beta}}.
$$
To get a lower bound, observe that for every $\eps>0$ and $n\ge 1/\eps^2$, 
$$\Ind{\{|\frac1{\sqrt n}Z_n|\ge\eps\}}\sqrt{n\log b-\log |Z_n|}\le \Ind{\{|\frac1{\sqrt n}|Z_n|\ge\eps\}}\sqrt{n\log b}.
$$
Hence, we get from \eqref{moment CLT eq} that 
$$\liminf_{n\ua\infty} b^n\bE\Big[\Phi\big(b^{-n}\big|Z_n\big|\big)\Big]\ge \frac1{\sqrt{2\pi\sigma^2\log b}}\int_{\{|z|\ge\eps\}}|z|e^{-z^2/(2\sigma^2)}\,dz.$$
Sending $\eps\downarrow 0$ and $\beta\ua\log b$ gives the result.
\end{proof}

\begin{proof}[Proof of Theorem \ref{TvdW thm} for $t=1$.]  For  $b$ even, \cite[Proposition 3.2 (a)]{SchiedZZhang} states that $Y_1,Y_2,\dots$ is an i.i.d.~sequence of symmetric $\{-1,+1\}$-valued Bernoulli random variables. Therefore,  \eqref{Vn eq}, the classical CLT, and Lemma \ref{CLT lemma} give
$V_n\to\sqrt{2/({\pi\log b})}$.
If $ b$ is odd, then \cite[Proposition 3.2 (b)]{SchiedZZhang} states that the random variables $\sgn(\alpha)^mY_m$ form a time-homogeneous Markov chain on $\{-1,0,+1\}$ with initial distribution $\mu_1=(\frac{b-1}{2b},\frac{1}{b},\frac{b-1}{2b})$  and transition matrix $P_+$, where
\[
  P_\pm:=\frac1{2b} \begin{pmatrix} b\pm1 & 0 & b\mp1 \\
   b-1 &2&b-1  \\
    b\mp1& 0 & b\pm1 
\end{pmatrix}.\]
It follows that $Y_1,Y_2,\dots$ also form a time-homogeneous Markov chain with  initial distribution $\mu_1$  and transition matrix $P_+$ for $\alpha>0$ and  $P_-$ for $\alpha<0$. 
 Since $0$ is a transient state, we can clearly consider only the restriction of the Markov chain to its positive recurrent states, $-1$ and $+1$. Let $\bbar P_\pm$ be the $2\times2$-matrix obtained from $P_\pm$ by deleting the second row and second column from $P$, and define $\bar\mu_1=(1/2,1/2)$. Then $\bar\mu_1$ is the unique stationary distribution for $\bbar P_\pm$.  Moreover,
 $$\bbar P^n_\pm=\frac12\begin{pmatrix}1+(\pm b)^{-n}&1-(\pm b)^{-n}\\
 1-(\pm b)^{-n}&1+(\pm b)^{-n}
 \end{pmatrix}.
 $$
For the state-constraint Markov chain $\bbar Y_1,\bbar Y_2,\dots$ with initial distribution $\bar\mu_1$ and transition matrix $\bbar P_\pm$, we thus have 
$\text{var}(\bbar Y_1)=1$ and 
\begin{align*}
\text{cov}(\bbar Y_1,\bbar Y_{n+1})&=\sum_{y_1,y_{n+1}\in\{-1,+1\}}\bar\mu_1(y_1)\bbar P_\pm^n(y_1,y_{n+1})y_1y_{n+1}
=(\pm b)^{-n}.
\end{align*}
Letting
$$\sigma^2:=\text{var}(\bbar Y_1)+2\sum_{n=1}^\infty \text{cov}(\bbar Y_1,\bbar Y_{n+1})=\frac{b\pm 1}{b\mp 1},
$$
the central limit theorem for Markov chains (see, e.g., \cite{JonesMarkovCLT}) implies that $\frac1{\sqrt{n}}\sum_{k=1}^n\bbar Y_k$ converges in law to $N(0,\sigma^2)$. Due to the stationarity of the Markov chain, we have moreover 
\begin{align*}
\mathbb E\Big[\Big(\frac1{\sqrt n}\sum_{k=1}^n\bbar Y_k\Big)^2\Big]&=\frac1n \sum_{k=1}^n\text{var}(\bbar Y_k)+\frac2n \sum_{k=1}^{n-1}\sum_{\ell=k+1}^n \text{cov}(\bbar Y_k,\bbar Y_{\ell})\\
&=1+\frac2n \sum_{k=1}^{n-1}\sum_{\ell=k+1}^n (\pm b)^{k-\ell}\le 1+\frac2n\cdot \frac{ b^{1-n}+ b n+ b-n}{(b- 1)^2},
\end{align*}
which is uniformly bounded in $n$. Therefore, Lemma \ref{CLT lemma} and \eqref{Vn eq} give
$V_n\to\sqrt{2(b\pm 1)/(\pi(b\mp 1)\log b)}$.
\end{proof}

Now we prepare for the proof of Theorem \ref{Weierstrass thm} for $t=1$. Let $\mathscr F_0=\{\emptyset,\Omega\}$ and $\mathscr F_n:=\sigma(U_1,\dots, U_n)$ for $n\in\mathbb N$. Then each $Y_n$ is $\mathscr F_n$-measurable.
Since $U_1,\dots, U_{n}$ can be recovered from $R_{n}$, we have $\mathscr F_n=\sigma(R_n)$ for $n\ge1$. We define 
$Z_0:=0$ and $ Z_n:=\sum_{k=1}^nY_k$ for $n\in\mathbb N$.

\begin{lemma}If $\varphi(t)=\nu\sin(2\pi t)+\rho\cos(2\pi t)$, then $\{Z_n\}_{n\in\mathbb N_0}$ is a martingale with respect to $\{\mathscr F_n\}_{n\in\mathbb N_0}$.
\end{lemma}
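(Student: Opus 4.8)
The plan is to verify the martingale property directly, by checking that each increment $Y_{n+1}$ has vanishing conditional expectation given $\mathscr F_n$. First I would record the two routine structural facts. Since $\varphi$ is a trigonometric polynomial, it is Lipschitz, so each $Y_k=\sgn(\alpha)^k b^k\big(\varphi((R_k+1)b^{-k})-\varphi(R_kb^{-k})\big)$ is bounded; hence $Z_n$ is bounded and in particular integrable. By construction $Y_k$ is $\mathscr F_k$-measurable, so $Z_n$ is $\mathscr F_n$-measurable. It therefore remains to prove that $\bE[Y_{n+1}\mid\mathscr F_n]=0$ for every $n\ge0$ (with $R_0:=0$ and $\mathscr F_0=\{\emptyset,\Omega\}$ in the case $n=0$).

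For this I would use the decomposition $R_{n+1}=R_n+U_{n+1}b^n$, which gives
$$R_{n+1}b^{-(n+1)}=R_nb^{-(n+1)}+\frac{U_{n+1}}b,\qquad (R_{n+1}+1)b^{-(n+1)}=R_nb^{-(n+1)}+b^{-(n+1)}+\frac{U_{n+1}}b.$$
Since $\mathscr F_n=\sigma(R_n)$ and $U_{n+1}$ is uniform on $\{0,\dots,b-1\}$ and independent of $\mathscr F_n$, conditioning on $\mathscr F_n$ amounts to averaging over the shifts $j/b$, $j=0,\dots,b-1$, with $R_n$ held fixed; concretely,
$$\bE\big[\varphi\big(R_{n+1}b^{-(n+1)}\big)\,\big|\,\mathscr F_n\big]=\frac1b\sum_{j=0}^{b-1}\varphi\Big(R_nb^{-(n+1)}+\frac jb\Big),$$
and likewise with $R_{n+1}+1$ in place of $R_{n+1}$.

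The one genuinely computational ingredient — and really the only one — is the identity $\sum_{j=0}^{b-1}\varphi(x+j/b)=0$ for every $x\in\bR$ and every $b\ge2$, valid for $\varphi(t)=\nu\sin(2\pi t)+\rho\cos(2\pi t)$. This follows by writing $\varphi(t)=\operatorname{Re}\big((\rho-\mathrm i\nu)e^{2\pi\mathrm i t}\big)$ and using $\sum_{j=0}^{b-1}e^{2\pi\mathrm i j/b}=0$, so that $\sum_{j=0}^{b-1}e^{2\pi\mathrm i(x+j/b)}=e^{2\pi\mathrm i x}\sum_{j=0}^{b-1}e^{2\pi\mathrm i j/b}=0$. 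Applying this with $x=R_nb^{-(n+1)}$ and with $x=R_nb^{-(n+1)}+b^{-(n+1)}$ shows that both conditional expectations above vanish, hence $\bE[Y_{n+1}\mid\mathscr F_n]=0$; together with integrability and adaptedness this yields that $\{Z_n\}_{n\in\bN_0}$ is a martingale. I do not expect a real obstacle here: the argument uses nothing about the sine/cosine weights beyond the root-of-unity cancellation, and the only point requiring a little care is bookkeeping the structure $\mathscr F_n=\sigma(R_n)$ with $R_{n+1}=R_n+U_{n+1}b^n$ when passing to the conditional expectation.
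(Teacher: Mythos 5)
Your proof is correct and follows essentially the same route as the paper: decompose $R_{n+1}=R_n+U_{n+1}b^n$, condition on $\mathscr F_n=\sigma(R_n)$ to average over the shifts $j/b$, and kill both resulting sums with the root-of-unity identity $\sum_{j=0}^{b-1}\varphi(x+j/b)=0$. The only (cosmetic) difference is that the paper handles the first increment separately by a telescoping argument, whereas you observe correctly that the identity applies uniformly for all $n\ge 0$, which is slightly cleaner.
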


\begin{proof}We must show that $\mathbb E[Y_n|R_{n-1}]=0$ $\mathbb P$-a.s.~for $n\ge1$. To this end, we use that $R_n=R_{n-1}+U_nb^{n-1}$, where $R_0:=0$ and $U_n$ is independent of $R_{n-1}$. Therefore,
\begin{align}
\mathbb E[Y_n|R_{n-1}=r]&=(\sgn(\alpha))^n\mathbb E\bigg[\frac{\varphi\big((r+U_nb^{n-1}+1)b^{-n}\big)-\varphi\big((r+U_nb^{n-1})b^{-n}\big)}{b^{-n}}\bigg]\nonumber\\
&=\frac{(\sgn(\alpha)b)^n}b\sum_{k=0}^{b-1}\Big( \varphi\big((r+1)b^{-n}+k/b\big)-\varphi\big(rb^{-n}+k/b\big)\Big).\label{cond exp sum eq}
\end{align}
If $n=1$, then $r$ must be zero, and the sum in \eqref{cond exp sum eq}
 is a telescopic sum with value $\varphi(1)-\varphi(0)=0$. Now consider the case $n\ge2$. Then, for all $x\in\mathbb R$, $i=\sqrt{-1}$, and $\mathfrak {Re}$ denoting the real part of a complex number,
 \begin{align*}
 \sum_{k=0}^{b-1}\varphi(x+k/b)&=\mathfrak {Re}\bigg((\rho-i\nu)\sum_{k=0}^{b-1}e^{2\pi i(x+k/b)}\bigg)
 =\mathfrak {Re}\bigg((\rho-i\nu)e^{2\pi ix}\cdot\frac{e^{2\pi ib/b}-1}{e^{2\pi i/b}-1}\bigg)=0.
 \end{align*}
 Therefore, the sum in \eqref{cond exp sum eq}
 vanishes. 
\end{proof}

\begin{lemma}\label{equidist lemma}With $\delta_x$ denoting the Dirac measure in $x\in\mathbb R$ and $\lambda$ denoting the Lebesgue measure on $[0,1]$, we have $\mathbb P$-a.s.,
$\frac1n\sum_{k=1}^n\delta_{b^{-k}R_k}\to\lambda$ weakly as $n\uparrow\infty$.\end{lemma}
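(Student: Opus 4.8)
The plan is to test the random empirical measure against the characters $e_\ell(x):=e^{2\pi i\ell x}$, $\ell\in\bZ$, which span a sup-norm dense subspace of the continuous $1$-periodic functions. Concretely, I would show that for every fixed $\ell\in\bZ$,
\begin{equation*}
\frac1n\sum_{k=1}^n e_\ell\big(b^{-k}R_k\big)\lra\int_0^1 e_\ell(x)\,dx=\Ind{\{\ell=0\}}\qquad\bP\text{-a.s.}
\end{equation*}
Since $\bZ$ is countable, outside a single $\bP$-null set this convergence holds simultaneously for all $\ell$; density of trigonometric polynomials together with the deterministic bound $\frac1n\sum_{k=1}^n\delta_{b^{-k}R_k}([0,1])=1$ then gives $\frac1n\sum_{k=1}^n g(b^{-k}R_k)\to\int_0^1 g\,d\lambda$ for every continuous $1$-periodic $g$, and a routine sandwiching of indicator functions of intervals between $1$-periodic continuous functions upgrades this to the asserted weak convergence $\frac1n\sum_{k=1}^n\delta_{b^{-k}R_k}\to\lambda$.

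The case $\ell=0$ is trivial, so fix $\ell\neq0$ and write $\ell=b^sm$ with $s\ge0$ and $b\nmid m$. Using $R_k=R_{k-s}+\sum_{i=k-s+1}^kU_ib^{i-1}$ and $mb^j\in\bZ$ for $j\ge0$, I would first check the elementary identities $e_\ell(b^{-k}R_k)=1$ for $1\le k\le s$ and $e_\ell(b^{-k}R_k)=e_m(b^{-(k-s)}R_{k-s})$ for $k>s$; hence $\frac1n\sum_{k=1}^n e_\ell(b^{-k}R_k)=\frac sn+\frac1n\sum_{j=1}^{n-s}e_m(b^{-j}R_j)$, which reduces the whole problem to the case $b\nmid\ell$ (that is, $s=0$, $m=\ell$).

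So assume $b\nmid\ell$ and set $S_n:=\sum_{k=1}^n e_\ell(b^{-k}R_k)$. The heart of the argument is the decorrelation bound $\bE[|S_n|^2]=n$. This follows once we show $\bE\big[e_\ell(b^{-k}R_k)\,\overline{e_\ell(b^{-k'}R_{k'})}\big]=0$ for $k<k'$: writing $e_\ell(b^{-k'}R_{k'})=\prod_{i=1}^{k'}e^{2\pi i\ell U_ib^{i-1-k'}}$, the factor $i=k'$ equals $e^{2\pi i\ell U_{k'}/b}$, and $U_{k'}$ is independent of $R_k$ and of all the remaining factors; since $b\nmid\ell$ we have $\bE[e^{-2\pi i\ell U_{k'}/b}]=\frac1b\sum_{u=0}^{b-1}e^{-2\pi i\ell u/b}=0$, so factoring out this expectation kills the whole term (while the diagonal terms $k=k'$ each contribute $1$). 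Consequently $\bE[|S_{j^2}/j^2|^2]=j^{-2}$, so by Markov's inequality and Borel--Cantelli $S_{j^2}/j^2\to0$ $\bP$-a.s.; and since $|e_\ell|\equiv1$ gives $|S_n-S_{j^2}|\le 2j+1$ whenever $j^2\le n<(j+1)^2$, we obtain $|S_n|/n\le|S_{j^2}|/j^2+(2j+1)/j^2\to0$ $\bP$-a.s. This establishes the displayed convergence for all $\ell$.

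The main obstacle is the decorrelation bound $\bE[|S_n|^2]=O(n)$, and the point to appreciate is that $b^{-k}R_k$ and $b^{-k'}R_{k'}$ are genuinely \emph{not} independent, because $R_k$ and $R_{k'}$ share the digits $U_1,\dots,U_k$. What saves the argument is that for $k<k'$ these shared digits enter $b^{-k'}R_{k'}$ only in its least significant places, whereas its leading digit $U_{k'}$ is fresh and already forces the cross-expectation to vanish --- but only when $b\nmid\ell$, which is precisely why one must first perform the digit-shift reduction $\ell=b^sm$. (As an aside, the lemma can alternatively be read off from the ergodic theorem for place-dependent iterated function systems applied to the maps $x\mapsto (x+j)/b$, $j=0,\dots,b-1$, whose unique stationary measure is $\lambda$, since $b^{-(k+1)}R_{k+1}=b^{-1}(b^{-k}R_k)+b^{-1}U_{k+1}$.)
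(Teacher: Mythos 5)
Your proof is correct, but it takes a genuinely different route from the paper. The paper extends $\{U_i\}_{i\in\bN}$ to a two-sided i.i.d.\ sequence, observes that $X_n:=\sum_{j\ge1}U_{n+1-j}b^{-j}$ is a stationary ergodic (Bernoulli-shift) process with marginal law $\lambda$, applies Birkhoff's ergodic theorem, and transfers the conclusion to $b^{-k}R_k$ via the deterministic bound $|b^{-n}R_n-X_n|\le b^{-n}$ together with separability of $C[0,1]$. You instead prove the statement through Weyl's equidistribution criterion: the digit-shift identity $e_{b^sm}(b^{-k}R_k)=e_m(b^{-(k-s)}R_{k-s})$ reduces matters to $b\nmid\ell$, the fresh leading digit $U_{k'}$ gives exact orthogonality $\bE\big[e_\ell(b^{-k}R_k)\overline{e_\ell(b^{-k'}R_{k'})}\big]=0$ for $k<k'$ and hence $\bE[|S_n|^2]=n$, and Borel--Cantelli along the squares plus the trivial increment bound upgrades this to almost sure convergence. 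Both arguments are sound. The paper's is shorter once one accepts mixing of the two-sided Bernoulli shift as known, and it exhibits the natural stationary process lurking behind $b^{-k}R_k$ (the digits read in reverse order). Yours is entirely self-contained and elementary --- no ergodic theory --- and moreover quantifies the decorrelation ($L^2$ rate $n^{-1/2}$ for the Weyl sums), at the cost of the character bookkeeping and the final sandwiching step needed to pass from periodic test functions to weak convergence on $[0,1]$ (harmless here since $\lambda$ has no atoms). Your correctly identified key point --- that $b^{-k}R_k$ and $b^{-k'}R_{k'}$ are far from independent but the leading digit of the latter is fresh --- is exactly the probabilistic content that the paper packages as mixing of the shift.
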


\begin{proof} Without loss of generality, we can extend the sequence $\{U_i\}_{i\in\mathbb N}$ to a two-sided sequence $\{U_i\}_{i\in\mathbb Z}$ of i.i.d.~random variables with a uniform distribution on $\{0,\dots, b-1\}$. Then we define
$X_n:=\sum_{j=1}^\infty U_{n+1-j}b^{-j}=\sum_{j=0}^\infty U_{n-j}b^{-(j+1)}$ for $n\in\mathbb Z$.
Each $X_n$ is uniformly distributed on $[0,1]$, i.e., has law $\lambda$. Moreover, in comparison with $X_n$, the random variable $X_{n+1}$ is obtained by shifting the sequence  $\{U_i\}_{i\in\mathbb Z}$ one step to the right. It is well-known that the dynamical system corresponding to such a two-sided Bernoulli shift is mixing and hence ergodic (for a proof, see, e.g., Example 20.26 in \cite{Klenke}). By Birkhoff's ergodic theorem, we thus have 
$\frac1n\sum_{k=1}^nf(X_k)\to \int_0^1f\,d\lambda
$
$\mathbb P$-a.s.~for each bounded Borel-measurable function on $[0,1]$. Since $|b^{-n}R_n-X_n|\le b^{-n}$, we hence obtain $\frac1n\sum_{k=1}^nf(b^{-k}R_k)\to \int_0^1f\,d\lambda
$
$\mathbb P$-a.s.~for each (uniformly) continuous function on $[0,1]$. Since $C[0,1]$ is separable, the result follows.
\end{proof}

\noindent{\it Proof of Theorem \ref{Weierstrass thm} for $t=1$.}  
Let
$\<Z\>_n:=\sum_{k=1}^n\mathbb E[Y_k^2|\mathscr F_{k-1}]$
be the predictable quadratic variation of the martingale $\{Z_n\}_{n\in\mathbb N_0}$. We define
$\psi_n(x):=(\varphi(x+b^{-n})-\varphi(x))/b^{-n}
$.
Then $\psi_n(x)\to\varphi'(x)$ uniformly in $x$. By arguing as in \eqref{cond exp sum eq}, we see that 
$\mathbb E[Y_k^2|\mathscr F_{k-1}]=\frac1b\sum_{\ell=0}^{b-1}\big(\psi_k(b^{-k}R_{k-1}+\ell/b)\big)^2.
$
We therefore conclude from Lemma \ref{equidist lemma} that 
$$\frac1n\<Z\>_n=\frac1b\sum_{\ell=0}^{b-1}\sum_{k=1}^n\big(\psi_k(b^{-k}R_{k-1}+\ell/b)\big)^2\longrightarrow \int_0^1\big(\varphi'(t)\big)^2\,dt=2\pi^2(\nu^2+\rho^2)=:\sigma^2.
$$
Analogously, one sees easily that 
 $s_n^2:=\mathbb E[\<Z\>_n]$ satisfies $\frac1n s_n^2\to\sigma^2$. 
Since the increments $Y_k$ are uniformly bounded, the Lindeberg condition,
$$\frac1{n}\sum_{k=1}^n\mathbb E\big[Y_k^2\Ind{\{Y_k^2\ge \varepsilon n\}}\big|\mathscr F_{k-1}\big]\longrightarrow 0\qquad\text{$\bP$-a.s.~for all $\eps>0$,}
$$
is clearly satisfied. Therefore, the martingale central limit theorem in the form of \cite[(7.4) in Chapter 7]{Durrett} yields that the laws of $\frac1{\sqrt{n}}Z_n$ converge weakly to $N(0,\sigma^2)$. Lemma \ref{CLT lemma} hence gives
$$V_n\longrightarrow 2\sqrt{\frac{\pi(\nu^2+\rho^2)}{\log b}}.\eqno{\qedsymbol}
$$
\bigskip

Finally, we show how the preceding results can be extended to the case $0\le t<1$. Writing $Z_n$ for $\sum_{k=1}^nY_k$, the $\Phi$-variation over the interval $[0,t]$ is equal to
\begin{align*}
V_{n,t}&:=\sum_{k=0}^{b^n-1}\Phi\big(\big|f((k+1)b^{-n})-f(kb^{-n})\big|\big)\Ind{[0,t]}(kb^{-n})=b^n\mathbb E\Big[\Phi\big(\big|f((R_n+1)b^{-n})-f(R_nb^{-n})\big|\big)\Ind{\{b^{-n}R_n\le t\}}\Big]\\
&=b^n\mathbb{E}\bigg[\Phi\Big(b^{-n}\Big|\sum_{m=1}^{n}Y_m\Big|\Big)\Ind{\{b^{-n}R_n\le t\}}\bigg]=\mathbb E\bigg[\frac{|Z_n|}{\sqrt{n\log b-\log |Z_n|}}\Ind{\{|Z_n|>0\}}\Ind{\{b^{-n}R_n\le t\}}\bigg].
\end{align*}
Let $\delta>0$ be given and pick $m\in\bN$ such that $b^{-m}\le\delta$. Clearly, $\{b^{-n}R_n\le t\}\subset\{b^{-n}R_{m,n}\le t\}$, where $R_{m,n}:=R_n-R_{n-m}=\sum_{k=n-m+1}^nU_kb^{k-1}$.  In addition, we argue as in the proof of Lemma \ref{CLT lemma} and take $\beta\in(0,\log b)$ and $n_0\in\bN$ such that $n\beta<n\log b-\log (Cn)$ for all $n\ge n_0$ and \eqref{log lower estimate} holds. Therefore, for $n\ge m\vee n_0$.
\begin{align*}
V_{n,t}&\le \frac1{\sqrt{n\beta}}\bE\big[|Z_n|\Ind{\{b^{-n}R_{m,n}\le t\}}\big]\le  \frac1{\sqrt{n\beta}}\bE\big[|Z_{n-m}|\Ind{\{b^{-n}R_{m,n}\le t\}}\big]+\frac1{\sqrt{n\beta}}\bE\Big[\Big|\sum_{k=n-m+1}^nY_k\Big|\Big].
\end{align*}
Clearly, the rightmost term converges to zero as $n\ua\infty$. Moreover, $Z_{n-m}$ and $R_{m,n}$ are independent, and so
$$\limsup_{n\ua\infty}V_{n,t}\le \sqrt{\frac{2\sigma^2}{\pi\beta}}\limsup_{n\ua\infty}\bP[b^{-n}R_{m,n}\le t]
\le \sqrt{\frac{2\sigma^2}{\pi\beta}}\limsup_{n\ua\infty}\bP[b^{-n}R_{n}\le t+\delta]=\sqrt{\frac{2\sigma^2}{\pi\beta}}(t+\delta),
$$
where the second inequality follows from the fact that $b^{-n}R_{m,n}\ge b^{-n}R_n-\delta$ for $n>m$. Sending $\beta\ua\log b$ and $\delta\da0$ gives the desired upper bound.

To get a corresponding lower bound, we choose $\delta>0$ and $m$ as in the upper bound. In addition, we choose  $\eps>0$.  For $n> m\vee1/\eps^2$, 
we then get as in the proof of Lemma \ref{CLT lemma},
\begin{align*}
V_{n,t}&\ge \mathbb E\bigg[\frac{|Z_n|}{\sqrt{n\log b}}\Ind{\{|\frac1{\sqrt n}|Z_n|\ge\eps\}}\Ind{\{b^{-n}R_n\le t\}}\bigg] \ge  \mathbb E\bigg[\frac{|Z_n|}{\sqrt{n\log b}}\Ind{\{|\frac1{\sqrt n}|Z_n|\ge\eps\}}\Ind{\{b^{-n}R_{m,n}\le t-\delta\}}\bigg].
\end{align*}
Now let $C$ be a uniform upper bound for $|Y_k|$ and choose $n_1$ such that $mC\le\eps\sqrt{n_1}$. Then, for $n\ge n_1\vee m\vee 1/\eps^2$,
\begin{align*}
V_{n,t}&\ge \mathbb E\bigg[\frac{|Z_{n-m}|}{\sqrt{n\log b}}\Ind{\{|\frac1{\sqrt n}|Z_{n-m}|\ge2\eps\}}\Ind{\{b^{-n}R_{m,n}\le t-\delta\}}\bigg]-\frac1{\sqrt{n\log b}}\bE\Big[\Big|\sum_{k=n-m+1}^nY_k\Big|\Big].
\end{align*}
Again, the second expectation on the right converges to zero. Using as before the independence of $Z_{n-m}$ and $R_{m,n}$ now easily gives the desired lower bound. This concludes the proof of Theorems \ref{TvdW thm} and \ref{Weierstrass thm} for $0\le t<1$.

\parskip-0.5em\renewcommand{\baselinestretch}{0.9}\normalsize
\bibliography{CTbook}{}
\bibliographystyle{plain}
\end{document}